\documentclass[a4paper,12pt]{amsart}
\usepackage{amsmath}
\usepackage{amssymb}
\usepackage{mathrsfs}
\usepackage{enumerate}
\usepackage{ifthen}
\usepackage{graphicx}
\usepackage[T1]{fontenc} 

\nonstopmode \numberwithin{equation}{section}
\newtheorem{thm}{Theorem}[section]

\theoremstyle{definition}

\newcounter{minutes}
\divide\time by 60
\newcounter{hours}
\multiply\time by 60
\addtocounter{minutes}{-\time}

\newcounter {own}
\def\theown {\thesection       .\arabic{own}}

{\qed\bigskip}

\newcounter{alphabet}

\begin{document}
	
	\title{Pre-Schwarzian norm estimate and characterization of certain harmonic mappings}

	\author{Sushil Pandit}
	\address{Sushil Pandit,
		Department of Mathematics,
		National Institute of Technology Meghalaya,
		Meghalaya-793108, India, Orcid Id-0000-0002-4129-6456}
	\email{sushilpandit15594@gmail.com}

	\subjclass[2010]{Primary 30C55, 30C45}
	\keywords{Harmonic mappings; Harmonic close-to-convex functions; Harmonic Bloch mappings; convex functions; pre-Schwarzian norm; coefficient bound; Growth theorem; Distortion theorem.}
	
	\def\thefootnote{}
	\footnotetext{ {\tiny File:~\jobname.tex,
			printed: \number\year-\number\month-\number\day,
			\thehours.\ifnum\theminutes<10{0}\fi\theminutes }
	} \makeatletter\def\thefootnote{\@arabic\c@footnote}\makeatother

	\begin{abstract}
		In this article, we consider certain class of harmonic mappings defined in the unit disk $\mathbb{D}=\{z\in\mathbb{C}: |z|<1\}.$ Then we obtain pre-Schwarzian norm estimate of functions in the class. Next, we show that functions in the considered class are univalent and close-to-convex. Moreover, we discuss some growth and distortion theorems for associated analytic and co-analytic parts of harmonic mappings in the class. At last, we present coefficient estimate for the analytic part.
	\end{abstract}

	\thanks{}
	
	\maketitle
	\pagestyle{myheadings}
	\markboth{Sushil Pandit}{Pre-Schwarzian norm estimate and characterization of certain harmonic mappings}
	
	\section{Introduction}

	Let $\mathcal{A}_0$ denote the class of all analytic functions $h$ in the unit disk $\mathbb{D}=\{z\in\mathbb{C}:|z|<1\}.$
	Let $\mathcal{A}$ be the class of functions $h\in\mathcal{A}_0$ with the normalization $h(0)=h'(0)-1=0.$ Let $\mathcal{S}$ be the class of functions $h\in\mathcal{A}$ that are univalent. A function $h\in\mathcal{S}$ is called convex in $\mathbb{D}$ if the image domain $h(\mathbb{D})$ is convex. A domain $\Omega$ is called convex if line segment joining any two points in $\Omega$ lies in $\Omega.$ The set of all convex functions in $\mathcal{S}$ is denoted by $\mathcal{C}$. It is well-known that (see \cite{Duren-1983}) a function $h\in\mathcal{S}$ is in $\mathcal{C}$ if and only if
	\begin{align*}
		{\rm Re\,}\left(1+z\frac{h''(z)}{h'(z)}\right)>0
	\end{align*}
	for $z\in\mathbb{D}.$ It is clear from the above characterization that the function $h(z)=z/(1-z)$ belongs to the class $\mathcal{C}.$ Indeed, the function $h$ maps $\mathbb{D}$ univalently onto ${\rm Re\,}w >-1/2$.
	
 A continuous twice differentiable complex valued function $f=u+iv$ is called harmonic in a domain $\Omega$ if both $u$ and $v$ are real harmonic in $\Omega.$ In any simply connected domain $\Omega,$ every harmonic mapping $f$ can be represented as $f=h+\overline{g},$ where $h$ and $g$ are analytic in $\Omega.$ This representation is known as canonical representation and $h$ is called analytic part whereas $g$ is called co-analytic part of $f.$ A harmonic mapping $f=h+\overline{g}$ is sense preserving if the Jacobian $J_f=|h'|^2-|g'|^2$ is positive and sense reversing if $J_f$ is negative. Lewy \cite{Lewy-1936} showed that $f$ is locally univalent if $J_f$ is non vanishing. Let $\mathcal{H}$ be class of all sense preserving harmonic functions $f=h+\overline{g}$ in $\mathbb{D}$ of the form
    \begin{align}\label{P-151}
		h(z)=z+\sum\limits_{n=2}^\infty a_nz^n,~g(z)=\sum\limits_{n=1}^\infty b_nz^n.
	\end{align}
	Let $\mathcal{S_H}$ denote the family of functions in $\mathcal{H}$ that are univalent in $\mathbb{D}$. In 1984, Clunie and Sheil-Small \cite{Clunie-Small-1984} studied several geometric properties of functions in $\mathcal{S_H}$ alongwith functions in its  subclasses of starlike functions, convex functions, close-to-convex functions.

	\subsection{Pre-Schwarzian and Schwarzian norms:}
	For a locally univalent analytic function $h$, the quantity 
	\begin{align}\label{P-141}
	||P_h|| = \sup_{z \in \mathbb{D}}(1-|z|^2)|P_h(z)|
	\end{align}
	is known as pre-Schwarzian norm of $h.$ Here, $P_h=\frac{h''}{h'}$ is called pre-Schwarzian derivative of $h.$ For a univalent function $h$, it is well known that $||P_h||\leq 6$ (see \cite{Kraus-1932}) and the estimate is sharp. On the other hand, for a locally univalent function $h$ in $\mathcal{A}$, it is known that if $||P_h||\leq 1$ (see \cite{Becker-1972}, \cite{Becker-Pommerenke-1984}), then the function $h$ is univalent in $\mathbb{D}$. In 1976, Yamashita \cite{Yamashita-1976} proved that $||P_h||$ is finite if and only if $h$ is uniformly locally univalent in $\mathbb{D}.$ In recent years, Firoz Ali and Sanjit Pal \cite{Ali-Pal-2023, Ali-Pal--2023, Ali-Pal---2023, Ali-Pal-2024} have studied pre-Schwarzian norm for different classes of analytic functions. 
	
	Several important global univalence criteria for a locally univalent analytic function $h$ were obtained using the notions of pre-Schwarzian derivative of $h$. These applications of the derivative generated a natural question if the concept can be generalized to harmonic mappings. Affirmatively, Kanas and Klimek-Sm\c{e}t \cite{Kanas-Smet-2014} proposed a definition of pre-Schwarzian derivative (which we denote by $\mathbf{P}_f$) for a locally univalent and sense preserving harmonic mapping $f = h+\overline{g}$ with its dilatation $\omega = g'/h'$ of the form $\omega = p^2$ for some analytic function $p$. The definition is given as follows:
	\begin{align}\label{p1-020}
		\mathbf{P}_f= \frac{2\partial(\log\lambda)}{\partial z}= \frac{h''}{h'}+\frac{2\overline{p}p'}{1+|p|^2},~~\text{where}~\lambda = |h'|+|g'|.
	\end{align}
	The pre-Schwarzian norm is defined similar to \eqref{P-141}. Ali and Pandit in \cite{Ali-Pandit-2023} have obtained estimate of $\mathbf{P}_f$ for different classes of close-to-convex harmonic mappings having different analytic parts. 
	
	The pre-Schwarzian derivative given in  \eqref{p1-020}  requires the dilatation to meet the condition $\omega = p^2$ that does not always hold for  $f=h+\overline{g}$. Due to this, in many cases one can not define it globally  for an univalent harmonic mapping. In 2015, Hern{\'a}ndez and Mart{\'\i}n \cite{Hernandez-Martin-2015} defined the pre-Schwarzian derivative of a locally univalent harmonic mapping $f=h+\overline{g}$ by\\
	\begin{align}\label{p1-027}
		P_f = \left(\log J_f\right)_z = \frac{h''}{h'}-\frac{\overline{\omega}\omega'}{1-|\omega|^2},
	\end{align}
	where $J_f$ is the Jacobian of $f$. Again, the pre-Schwarzian norm is defined same as \eqref{P-141}. In 2015, Hern{\'a}ndez and Mart{\'\i}n \cite{Hernandez-Martin-2015} proved the sharp estimate $\|P_f\|\le 5$ for convex harmonic mappings $f$. In 2016, Graf \cite{GRAF-2016} obtained sharp estimate of $\|P_f\|$ for any locally univalent harmonic mapping $f$ in a affine and linear invariant family. In 2019, Liu and Ponnusamy \cite{Liu-Ponnusamy-2018} obtained the sharp estimates of the pre-Schwarzian norm $\|P_f\|$ for stable harmonic univalent functions and stable harmonic convex functions. Ali and Pandit \cite{Ali-Pandit-2023} provided a different method of constructing extremal function for sharpness of pre-Schwarzian norm estimates. Moreover, in the same article \cite{Ali-Pandit-2023}, authors considered a class of close-to-convex harmonic mappings and obtained estimate of $\|P_f\|.$ In this article, we obtain sharp estimate of pre-Schwarzian norm of certain harmonic mappings.\\

		\subsection{Bloch Functions:}
	
	There are many known results concerning the order of growth of the first and higher derivatives of bounded univalent functions in the unit circle. An interesting outcome of these results for functions $h$ analytic in $\mathbb{D}$ is $|h'(z)| = O((1-|z|)^{-3})$ when $h$ is univalent and $|h^{n}(z )|=O((1-|z|)^{-n})$ when $h$ is bounded. All these investigations fail to give a proper analysis of the behavior of the quantity $|h'(z)|(1-|z|)$ as $|z|\rightarrow 1$ from the interior of the unit circle (see \cite{Seidel-Walsh-1942}). Study of this problem is one of the main reason of discovery of the concept of Bloch functions. An analytic function $h$  defined in $\mathbb{D}$ is called Bloch if $\beta_h=\sup_{z\in\mathbb{D}}(1-|z|^2)|h'(z)|<\infty.$	The collection $\mathcal{B}$ of analytic Bloch functions in $\mathbb{D}$ form a Banach space with the norm given by
	\begin{align*}
		||h||_{\mathcal{B}}=|h(0)|+\sup_{z\in\mathbb{D}}(1-|z|^2)|h'(z)|.
	\end{align*} 
	For more properties of analytic Bloch functions, we refer to the articles \cite{Pommerenke-1970, Anderson-Clunie-Pommerenke-1974,  Anderson-Shields-1976, Bonk-1991, Bonk-Minda-Yanagihara-1996}. A harmonic mapping $f=h+\overline{g}\in\mathcal{H}$ is called Bloch if and only if
	\begin{align*}
		\beta_f=\sup_{z\in\mathbb{D}}(1-|z|^2)(|h'(z)|+|g'(z)|)<\infty.
	\end{align*}
	For more information about harmonic Bloch mappings, we refer to \cite{Chen-Gauthier-Hengartner-2000, Colonna-1989}.

	\section{Main Results}\label{Sect-2}
	Analytic parts of harmonic mappings play an vital role to shape their geometric properties. In particular, if $f=h+\overline{g}\in\mathcal{H}$ is a sense preserving harmonic mapping and, $h$ is univalent and convex, then $f=h+\overline{g}$ is univalent and close-to-convex (see \cite{Clunie-Small-1984}). In 2011, Bshouty and Lyzzaik \cite{Bshouty-Lyzzaik-2011} proved that if $f=h+\overline{g}\in\mathcal{H}$ is a harmonic mapping with dilatation $\omega=z$ and the analytic part $h \in \mathcal{A}$ satisfies $${\rm Re\,}\left(1+\frac{zh''(z)}{h'(z)}\right)>-\frac{1}{2}$$ for $z\in\mathbb{D}$, then $f$ is univalent and close-to-convex. In this connection, we consider a class $\mathcal{H_R}$ of sense preserving harmonic mappings $f=h+\overline{g}\in\mathcal{H}$ in the unit disk $\mathbb{D}$ having dilatation $\omega=g'/h'$ and $h\in\mathcal{R}$ where $\mathcal{R}$ is the class of analytic functions $h\in\mathcal{A}$ in $\mathbb{D}$ such that 
	\begin{align*}
		h'(z)=\frac{m}{m-z^m},~~m\in\mathbb{N}.
	\end{align*}
In Theorem \ref{P-171}, we show that $f=h+\overline{g}\in\mathcal{H_R}$ is univalent and close-to-convex by showing that $h$ is univalent and convex. 

Our first result provides pre-Schwarzian norm estimate of a harmonic mapping $f$ in the class $\mathcal{H_R}.$
	\begin{thm}
		Let $f=h+\overline{g}\in\mathcal{H_R}\in\mathcal{H}$ be a harmonic mapping of the form \eqref{P-151}. Then pre-Schwarzian norm $\|P_f\|\le 3$. The estimate is sharp. 
	\end{thm}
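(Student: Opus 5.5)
The plan is to bound the two terms of the Hern\'andez--Mart\'in pre-Schwarzian \eqref{p1-027} separately and then add the bounds. For $f=h+\overline{g}\in\mathcal{H_R}$ we have $h'(z)=m/(m-z^m)$, so
\begin{align*}
P_f(z)=\frac{h''(z)}{h'(z)}-\frac{\overline{\omega(z)}\,\omega'(z)}{1-|\omega(z)|^2}
=\frac{m z^{m-1}}{m-z^m}-\frac{\overline{\omega(z)}\,\omega'(z)}{1-|\omega(z)|^2}.
\end{align*}
Since $f$ is sense preserving, $\omega$ is analytic with $|\omega|<1$ in $\mathbb{D}$. The target is
\begin{align*}
(1-|z|^2)\left|\frac{h''(z)}{h'(z)}\right|\le 2
\quad\text{and}\quad
(1-|z|^2)\frac{|\omega(z)||\omega'(z)|}{1-|\omega(z)|^2}\le 1 ,
\end{align*}
which together give $\|P_f\|\le 3$.

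\emph{Second term.} By the Schwarz--Pick lemma, $(1-|z|^2)|\omega'(z)|\le 1-|\omega(z)|^2$. Combined with $|\omega(z)|<1$, this gives the bound $1$ at once.

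\emph{First term.} Put $r=|z|$. Then
\begin{align*}
(1-|z|^2)\left|\frac{h''(z)}{h'(z)}\right|\le \frac{m r^{m-1}(1-r^2)}{m-r^m}=:\phi_m(r).
\end{align*}
For $m=1$ this is exactly $(1-r^2)/(1-r)=1+r<2$. For $m\ge 2$ I would show $\phi_m(r)\le 2$ on $[0,1)$. One route is the crude bound $\phi_m(r)\le m(1-r^2)/(m-1)\le 2$. A cleaner route is to note that $m-r^m\ge m-1$ and that $r^{m-1}(1-r^2)$ is small. This elementary one-variable estimate is routine but has to be checked for all $m$. So the uniform bound $2$ holds, with supremum $2$ attained in the limit only when $m=1$ and $z\to1$.

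\emph{Sharpness.} This is the main obstacle, because the two suprema have to be approached at the same point and with aligned arguments. Take $m=1$, so $h(z)=-\log(1-z)$, and evaluate at $z=r\in(0,1)$, where $h''/h'=1/(1-r)>0$. For the subtracted term to add to this, we need $\overline{\omega(r)}\,\omega'(r)<0$ together with $|\omega(r)|\to1$. I would use the disk automorphisms $\omega_s(z)=(z-s)/(1-sz)$ with $0<s<1$, and choose $g_s$ with $g_s'=\omega_s h'$ and $g_s(0)=0$. At $z=r<s$ we have $\omega_s(r)<0$ and $\omega_s'(r)>0$, so the second term has modulus exactly $|\omega_s(r)|$ (Schwarz--Pick is an equality for automorphisms) and points in the same direction as the first. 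Hence
\begin{align*}
(1-r^2)|P_{f_s}(r)|=1+r+\frac{s-r}{1-sr}.
\end{align*}
Letting $s\to1^-$ first and then $r\to1^-$ makes this tend to $3$. Therefore $\sup_{f\in\mathcal{H_R}}\|P_f\|=3$, and the constant $3$ cannot be improved. I expect the delicate point to be how the write-up phrases sharpness: as a supremum over the class, rather than equality for a single extremal $f$. Attaining the value $3$ with one $f$ would need a dilatation whose hyperbolic derivative tends to $1$ along $r\to1$ with the correct argument. A suitable infinite Blaschke product with zeros accumulating at $1$ could give this, and I would try that if the paper requires attainment.
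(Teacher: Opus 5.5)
Your argument is correct and is essentially the paper's. You use the triangle inequality with Schwarz--Pick for the dilatation term, then the bound $\|P_h\|\le 2$; you get it from the elementary estimate $\frac{m r^{m-1}(1-r^2)}{m-r^m}\le \frac{m}{m-1}$ for $m\ge 2$, where the paper applies Schwarz--Pick to $z^m/m$. For sharpness you use the same extremal family $h(z)=-\log(1-z)$, $\omega_q(z)=(z-q)/(1-qz)$, evaluated on $[0,1)$.

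Your formulation of sharpness as $\sup_{f\in\mathcal{H_R}}\|P_f\|=3$ is the accurate one. The paper's assertion $\|P_{f_q}\|=3$ for a fixed $q$ is actually false: since $\omega_q(1)=1$, the two terms nearly cancel as $z\to 1$, so $\|P_{f_q}\|<3$ for each fixed $q$. Its computation $\phi(r_0)\to 3$ as $q\to 1$ yields only the supremum, exactly as your iterated limit does.
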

	\begin{proof}
		The pre-Schwarzian norm $\|P_f\|$ of a harmonic mapping $f=h+\overline{g}\in\mathcal{H_R}$ is 
		\begin{align}\label{HM-100}
			\|P_f\|= & \sup_{z \in \mathbb{D}}\left|\frac{h''}{h'}-\frac{\overline{\omega}\omega'}{1-|\omega|^2}\right|(1-|z|^2)\\\nonumber
			\le & \sup_{z \in \mathbb{D}}\left|\frac{h''}{h'}\right|(1-|z|^2)+\sup_{z \in \mathbb{D}}\left|\frac{\overline{\omega}\omega'}{1-|\omega|^2}\right|(1-|z|^2)\\\nonumber
			\le & 	\|P_h\|+1.
		\end{align}

		By the hypothesis, it follows that 
		$h'(z)=\frac{m}{m-z^m},~m\in\mathbb{N}$
		and so the pre-Scwarzian derivative of $h$ is
		\begin{align*}
			P_h(z)=\frac{mz^{m-1}}{m-z^m}. 
		\end{align*}
		Now the pre-Schwarzian norm $\|P_h\|$ of $h,$ using Schwarz pick lemma, is 
		\begin{align*}
			\|P_h\|=&\sup\limits_{z\in\mathbb{D}}|P_h(z)|(1-|z|^2)\\
			=&\sup\limits_{z\in\mathbb{D}}\left|\frac{z^{m-1}}{1-z^m/m}\right|(1-|z|^2)\\
			\le &\sup\limits_{z\in\mathbb{D}}\frac{|z^{m-1}|}{1-\left|z^m/m\right|}(1-|z|^2)\\
			=&\sup\limits_{z\in\mathbb{D}}\frac{|\left(z^m/m\right)'|\left(1+\left|z^m/m\right|\right)}{1-\left|z^m/m\right|^2}(1-|z|^2)\\
			\le&\sup\limits_{z\in\mathbb{D}}\frac{1+\left|z^m/m\right|}{(1-|z|^2)}(1-|z|^2)\\
			\le& \sup\limits_{z\in\mathbb{D}}(1+|z|)\\
			=&2.
		\end{align*}
		Thus from \eqref{HM-100}, the pre-Schwarzian norm $\|P_f\|$ is
		$$\|P_f\|\le 3.$$

		To show that the estimate is sharp, we consider a harmonic mapping $f_q=h+\overline{g}$ with $h(z)=-\log{(1-z)}$ and dilatation $\omega_q(z)=\frac{z-q}{1-qz},~q\in(0,1).$ It is clear that $f_q\in \mathcal{H_R}$ and so $\|P_{f_q}\|\le3.$ A few calculations show that 
		\begin{align}
			P_h(z)=\frac{1}{1-z}\quad\text{and}\quad \frac{\overline{\omega_q(z)}\omega_q'(z)}{1-|\omega_q(z)|^2}=\frac{\overline{z}-q}{(1-qz)(1-|z|^2)}
		\end{align}
		from which it follows that 
		\begin{align}
			\|P_{f_q}\|=&\sup_{z \in \mathbb{D}}\left|\frac{h''(z)}{h'(z)}-\frac{\overline{\omega_q(z)}\omega_q'(z)}{1-|\omega_q(z)|^2}\right|(1-|z|^2)\\\nonumber
			=&\sup_{z \in \mathbb{D}}\left|\frac{1}{1-z}-\frac{\overline{z}-q}{(1-qz)(1-|z|^2)}\right|(1-|z|^2)\\\nonumber
			\ge & S_q
		\end{align}
		where 
		\begin{align}
			S_q=&\sup_{z \in [0,1)}\left|\frac{1}{1-z}-\frac{\overline{z}-q}{(1-qz)(1-|z|^2)}\right|(1-|z|^2)\\\nonumber\\\nonumber
			=&\sup_{r \in [0,1)}\left|1+r-\frac{r-q}{1-qr}\right|\\\nonumber
			=&\sup_{r \in [0,1)}\phi(r)\\\nonumber
		\end{align}
		with $\phi(r)=1+r-\frac{r-q}{1-qr}.$ We note that $\phi(0)=1+q,~\phi(1^-)=1.$ Next $\phi'(r)=0$ implies that $$r=\frac{1\pm\sqrt{1-q^2}}{q}.$$ We consider $\frac{1-\sqrt{1-q^2}}{q}=r_0$ as it lies in $[0,1).$ Since $r_0$ is stationary point and $$\phi(r_0)=\frac{2+q-2\sqrt{1-q^2}}{q}\rightarrow 3$$ as $q$ tends to $1,$ it follows that $S_q=3.$ Thus we get the relation $$3=S_q\le \|P_{f_q}\|\le 3$$ which implies that $\|P_{f_q}\|=3.$
	\end{proof}
Here, we note that, for a harmonic mapping $f=h+\overline{g},$ the dilatation of the form  $\omega(z)=g'(z)/h'(z)=\frac{z-q}{1-qz},~q\in(0,1)$  is used widely to get more interesting results.
	\begin{thm}\label{P-171}
		Every $f=h+\overline{g}\in\mathcal{H_R}$ is univalent and close-to-convex in $\mathbb{D}.$
	\end{thm}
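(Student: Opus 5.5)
The plan is to reduce the statement to the classical result of Clunie and Sheil-Small quoted at the start of Section~\ref{Sect-2}: if $f=h+\overline{g}\in\mathcal{H}$ is sense preserving and its analytic part $h$ is univalent and convex, then $f$ is univalent and close-to-convex. Every $f\in\mathcal{H_R}$ is sense preserving by the definition of $\mathcal{H_R}$. So it suffices to show that every $h\in\mathcal{R}$, that is, every $h\in\mathcal{A}$ with $h'(z)=m/(m-z^m)$ for some $m\in\mathbb{N}$, belongs to $\mathcal{C}$.

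To prove convexity I will use the analytic characterization recalled in the introduction, namely ${\rm Re\,}(1+zh''(z)/h'(z))>0$ in $\mathbb{D}$ together with $h'\neq 0$. Since $|z^m|<1\le m$, we have $m-z^m\neq 0$, so $h'$ is analytic and non-vanishing in $\mathbb{D}$ and $h$ is locally univalent. Using the pre-Schwarzian already computed in the previous proof, $zh''/h'=zP_h(z)=mz^m/(m-z^m)$. Setting $w=z^m/m$, which satisfies $|w|<1/m\le 1$, this gives
\begin{align*}
1+\frac{zh''(z)}{h'(z)}=1+\frac{mw}{1-w}.
\end{align*}

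The only real work is the sign of this real part. For $m=1$ the expression is $1/(1-z)$, whose real part is positive because $1-z$ lies in the right half-plane. For $m\ge 2$ I plan to bound directly:
\begin{align*}
{\rm Re\,}\frac{mw}{1-w}\ge -\frac{m|w|}{1-|w|}=-\frac{|z|^m}{1-|z|^m/m}>-\frac{1}{1-1/m}.
\end{align*}
This lower bound equals $-m/(m-1)$, which is below $-1$, so this crude estimate is not enough. Instead I will use that $w\mapsto w/(1-w)$ maps the disk $|w|<\rho$ onto a disk centered at $\rho^2/(1-\rho^2)$ with radius $\rho/(1-\rho^2)$. With $\rho=1/m$, the minimal real part is $-\rho/(1+\rho)=-1/(m+1)$. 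Hence
\begin{align*}
{\rm Re\,}\left(1+\frac{mw}{1-w}\right)>1-\frac{m}{m+1}=\frac{1}{m+1}>0.
\end{align*}
This argument covers all $m$, including $m=1$.

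With ${\rm Re\,}(1+zh''/h')>0$ and the normalization $h(0)=h'(0)-1=0$, the classical theorem gives that $h$ is univalent and $h(\mathbb{D})$ is convex, so $h\in\mathcal{C}$. Applying the Clunie--Sheil-Small result to the sense-preserving mapping $f=h+\overline{g}$ then yields that $f$ is univalent and close-to-convex in $\mathbb{D}$.

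The main obstacle I expect is the real-part estimate: the naive triangle-inequality bound fails, and one has to use the exact image of the disk $|w|<1/m$ under the Möbius map $w/(1-w)$. A secondary point is to make sure the quoted Clunie--Sheil-Small criterion needs only sense preservation and convexity of $h$, with no further restriction on the dilatation $\omega$.
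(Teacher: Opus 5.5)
Your proof is correct and has the same skeleton as the paper's: reduce via Clunie--Sheil-Small to $h\in\mathcal{C}$, then verify ${\rm Re\,}(1+zh''/h')>0$. The difference is in how that inequality is checked. The paper substitutes $\zeta=z^m$ and writes the real part as
\[
\frac{1+\left(1-\frac{2}{m}\right){\rm Re\,}\zeta-\frac{1}{m}\left(1-\frac{1}{m}\right)|\zeta|^2}{|1-\zeta/m|^2}.
\]
It then treats $m=1$, $m=2$ and $m>2$ separately, bounding the numerator below by $\frac{1}{m}+\frac{1}{m^2}$ in the last case. It obtains univalence of $h$ separately from ${\rm Re\,}h'>0$ and Noshiro--Warschawski, because the convexity criterion in its introduction is quoted only for $h\in\mathcal{S}$. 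Your normalization $w=z^m/m$, combined with the exact image of $|w|<1/m$ under $w\mapsto w/(1-w)$, avoids the case analysis. It also gives the sharp quantitative bound ${\rm Re\,}(1+zh''/h')>1/(m+1)$, approached as $z^m\to-1$, so $h$ is convex of order $1/(m+1)$; the paper's computation does not record this. The cost is that you use the stronger form of the classical criterion: for a normalized locally univalent $h$, ${\rm Re\,}(1+zh''/h')>0$ already forces univalence. That form is standard, but if you want to stay within the version stated in the paper, add the one-line observation that $|z^m/m|<1$ gives ${\rm Re\,}h'>0$. Handling $m=1$ separately was the right call, since for $\rho=1$ the disk-image formula degenerates into the half-plane ${\rm Re\,}u>-\frac{1}{2}$. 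Your secondary worry is unfounded: $|\omega|<1$ plus convexity of $h$ is exactly what Clunie--Sheil-Small need, because each $h+\epsilon g$ with $|\epsilon|=1$ is close-to-convex with respect to $h$, as ${\rm Re\,}(1+\epsilon\omega)>0$.
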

	\begin{proof}
		To show that $f=h+\overline{g}\in\mathcal{H_R}$ is univalent and close-to-convex, it is sufficient to show the analytic part $h$ is univalent and convex.
		It is easy to show that ${\rm Re\,}h'(z)>0$ and so $h$ is univalent from Noshiro-Warshawski Theorem (see \cite{Duren-1983}). A simple calculation shows that 
		\begin{align}\label{v-00100}
			1+\frac{zh''(z)}{h'(z)}=\frac{1+\left(1-\frac{1}{m}\right)z^m}{1-\frac{1}{m}z^m}	
		\end{align}
		Let us consider a transformation $$z^m\rightarrow\zeta$$
		where $\zeta:\mathbb{D}\rightarrow\mathbb{D}$ a self mapping of unit disk. Next from \eqref{v-00100}, we get
		\begin{align*}
			1+\frac{zh''(z)}{h'(z)}=\frac{1-\frac{1}{m}\left(1-\frac{1}{m}\right)|\zeta(z)|^2+\left(1-\frac{1}{m}\right)\zeta(z)-\frac{1}{m}\overline{\zeta(z)}}{|1-\frac{1}{m}\zeta(z)|^2}.
		\end{align*}
		Now, for $m=1$ and $m=2$ we get 
		$${\rm Re\,}\left(1+\frac{zh''(z)}{h'(z)}\right)=\frac{1-{\rm Re\,}\left(\zeta(z)\right)}{|1-\zeta(z)|^2}\quad\text{and}\quad {\rm Re\,}\left(1+\frac{zh''(z)}{h'(z)}\right)=\frac{4-|\zeta(z)|^2}{|2-\zeta(z)|^2}$$
		respectively, which are positive for $z\in\mathbb{D}\setminus\{0\}.$ Next, for $m>2$, it follows that
		\begin{align*}
			{\rm Re\,}\left(1+\frac{zh''(z)}{h'(z)}\right)=&\frac{1-\frac{1}{m}\left(1-\frac{1}{m}\right)|\zeta(z)|^2+\left(1-\frac{2}{m}\right){\rm Re\,}\left(\zeta(z)\right)}{|1-\frac{1}{m}\zeta(z)|^2}\\
			\ge & \frac{1-\frac{1}{m}\left(1-\frac{1}{m}\right)-\left(1-\frac{2}{m}\right)}{|1-\frac{1}{m}\zeta(z)|^2}\\
			= & \frac{\frac{1}{m}+\frac{1}{m^2}}{|1-\frac{1}{m}\zeta(z)|^2}\\
			\ge & 0.
		\end{align*}
		Thus $h$ is convex in $\mathbb{D}.$ This completes the proof.
	\end{proof}
	
Next result shows that every harmonic mapping in the class $\mathcal{H_R}$ is harmonic Bloch.
	
	\begin{thm}
		Let $f=h+\overline{g}\in\mathcal{H_R}$ be a sense preserving harmonic mapping of the form \eqref{P-151}. Then $f$ is harmonic Bloch. 
	\end{thm}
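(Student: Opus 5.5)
We show that $\beta_f=\sup_{z\in\mathbb{D}}(1-|z|^2)(|h'(z)|+|g'(z)|)$ is finite by bounding $|h'|$ and $|g'|$ directly. Everything follows from the explicit form of $h'$ together with the sense-preserving condition. Since $f\in\mathcal{H_R}$, we have $h'(z)=\frac{m}{m-z^m}$ for some $m\in\mathbb{N}$. Also, $f$ is sense preserving, so the dilatation $\omega=g'/h'$ is analytic in $\mathbb{D}$ and satisfies $|\omega(z)|<1$. Hence $|g'(z)|=|\omega(z)||h'(z)|<|h'(z)|$, and therefore
\begin{align*}
(1-|z|^2)\bigl(|h'(z)|+|g'(z)|\bigr)\le 2(1-|z|^2)|h'(z)|.
\end{align*}
It thus suffices to show that $h$ is an analytic Bloch function with an explicit bound.

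To bound $(1-|z|^2)|h'(z)|$, we use $|m-z^m|\ge m-|z|^m$, which gives
\begin{align*}
(1-|z|^2)|h'(z)|\le \frac{m(1-|z|^2)}{m-|z|^m}.
\end{align*}
We then split into two cases. If $m\ge2$, then $m-|z|^m\ge m-1\ge m/2$, so the right-hand side is at most $2$. If $m=1$, the right-hand side equals $\frac{1-|z|^2}{1-|z|}=1+|z|<2$. In either case $(1-|z|^2)|h'(z)|\le 2$, so $\beta_f\le 4<\infty$ and $f$ is harmonic Bloch. A slightly sharper constant is available: for $m\ge2$ one can bound $(1-r^2)/(m-r^m)$ more carefully on $r\in[0,1)$, but finiteness is all that is needed.

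The only delicate point is the case $m=1$, where $h'(z)=1/(1-z)$ is unbounded near $z=1$. There the factor $1-|z|^2=(1-|z|)(1+|z|)$ must be used to cancel the singularity, rather than a crude bound on $|h'|$. The rest is routine. We use no earlier theorems except the sense-preserving hypothesis $|\omega|<1$, though one could alternatively note that $h$ is convex by Theorem \ref{P-171} and invoke the classical growth bound $|h'(z)|\le 1/(1-|z|)^2$ for convex functions. We avoid that route because it does not yield a finite Bloch bound.
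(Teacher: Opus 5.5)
Your proof is correct and takes essentially the paper's route: you estimate $(1-|z|^2)(1+|\omega(z)|)\,|h'(z)|$ directly from the explicit formula for $h'$ and obtain the same bound $\beta_f\le 4$. The differences are minor and in your favor. The paper bounds $|\omega(z)|\le |z|$ via Schwarz--Pick, which tacitly requires $\omega(0)=0$ (not part of the definition of $\mathcal{H_R}$), whereas you use only $|\omega|<1$. Also, you split into the cases $m=1$ and $m\ge 2$, where the paper instead uses the uniform bound $|h'(z)|\le 1/(1-|z|)$.
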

	\begin{proof}
	We need to show that $\beta_f$ is finite for $f=h+\overline{g}\in\mathcal{H_R}$. By the hypothesis we get $$|h'(z)|\le\frac{1}{1-|z|}.$$ For $f\in\mathcal{H_R},$ by Schwarz Pick lemma, we have
	\begin{align*}
		\beta_f=& \sup_{z\in\mathbb{D}}(1-|z|^2)(|h'(z)|+|g'(z)|)\\
		= & \sup_{z\in\mathbb{D}}(1-|z|^2)(1+|\omega(z)|)|h'(z)|\\
		= & \sup_{z\in\mathbb{D}}(1-|z|^2)(1+|\omega(z)|)\left|\frac{m}{m-z^m}\right|\\
		\le & \sup_{z\in\mathbb{D}}(1-|z|^2)(1+|z|)\frac{1}{1-|z|}\\
		= & \sup_{z\in\mathbb{D}}(1+|z|)^2\\
		=& 4 <\infty.
	\end{align*}
	This completes the proof.
	\end{proof}
Now we discuss some growth and distortion theorems for analytic part and co-analytic part of harmonic mappings in the class $\mathcal{H_R}.$	
	\begin{thm}
	Let $f=h+\overline{g}\in\mathcal{H_R}$ be sense preserving harmonic mapping of the form \eqref{P-151}. Then, for $|z|=r,$ the sharp inequalities 
	\begin{align*}
	\frac{1}{1+r}\le|h'(z)|\le\frac{1}{1-r}\quad\text{and}\quad 0\le|g'(z)|\le\frac{1}{1-r}
	\end{align*}
	hold.
	\end{thm}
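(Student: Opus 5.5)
We would argue directly from the explicit form $h'(z)=\frac{m}{m-z^m}=\frac{1}{1-z^m/m}$, $m\in\mathbb{N}$, together with the relation $g'=\omega h'$ for the dilatation $\omega$.

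\textbf{Bounds for $h'$.} Put $w=z^m/m$. For $|z|=r<1$ we have $|w|=r^m/m\le r$, since $r^m\le r$ and $m\ge 1$. The triangle inequality gives $1-|w|\le|1-w|\le 1+|w|$. Hence
\begin{align*}
\frac{1}{1+r}\le\frac{1}{1+r^m/m}\le |h'(z)|\le \frac{1}{1-r^m/m}\le\frac{1}{1-r}.
\end{align*}
For sharpness we take $m=1$, so that $h'(z)=1/(1-z)$ (this is $h(z)=-\log(1-z)$, the function already used in the sharpness example of the first theorem). The upper bound is attained at $z=r$ and the lower bound at $z=-r$.

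\textbf{Bounds for $g'$.} Since $f$ is sense preserving, $\omega=g'/h'$ is analytic with $|\omega|<1$ in $\mathbb{D}$. Therefore
\begin{align*}
|g'(z)|=|\omega(z)||h'(z)|\le |h'(z)|\le \frac{1}{1-r}.
\end{align*}
One could sharpen this using $\omega(0)=b_1$, but the statement only needs $|\omega|<1$. The lower bound $0$ is trivial, and it is attained when $\omega$ vanishes at the point (for instance $g\equiv 0$, or $\omega(z)=z$ at $z=0$).

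\textbf{Sharpness of the upper bound for $|g'|$ (the main obstacle).} The difficulty is that $|\omega|<1$ is strict, so equality is not reached by any single function. We would argue sharpness in the sense that the constant cannot be improved. Take $m=1$ and the constant dilatation $\omega\equiv c$ with $c\in(0,1)$. The resulting map lies in $\mathcal{H_R}$, and $g'(r)=c/(1-r)$. Letting $c\to1^-$ gives $\sup|g'(z)|(1-r)=1$ over the class.

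Alternatively, we could use the dilatations $\omega_q(z)=(z-q)/(1-qz)$ from the sharpness example of the first theorem. Since $|\omega_q(z)|\to 1$ as $q\to 1^-$ for fixed $z\ne 1$, these also show that $1/(1-r)$ is the best constant.
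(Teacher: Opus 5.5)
Your proof is correct. For $h'$ it coincides with the paper, which just calls the two-sided bound easy to see. You supply the reason ($|z^m/m|\le r$ and the triangle inequality) and note that $m=1$ gives equality at $z=r$ and $z=-r$.

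For $g'$ your route is more elementary. You use only $|\omega|<1$. The paper instead first invokes the Schwarz--Pick bounds $\frac{r-t}{1-tr}\le|\omega(z)|\le\frac{r+t}{1+tr}$ with $t=|\omega(0)|=|b_1|$, derives $\frac{r-t}{(1-tr)(1+r)}\le|g'(z)|\le\frac{r+t}{(1+tr)(1-r)}$, and then optimizes in $t$. That detour gives finer bounds depending on $|b_1|$; the upper one is attained at $z=r$ by $h=-\log(1-z)$ with $\omega_t(z)=(z+t)/(1+tz)$, which is where the paper's extremal family comes from. It is not needed for the statement as written, though. Also, the paper's claim that the left-hand quantity has minimum $0$ at $t=r$ is imprecise, since that quantity is negative for $t>r$; the $0$ really comes from $|\omega|\ge 0$, exactly as in your argument.

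Your constant dilatations $\omega\equiv c$ with $c\to1^-$ do the same job as the paper's $\omega_t$ with $t\to 1^-$. You are also right, and more explicit than the paper, that $|g'(z)|<1/(1-r)$ always holds strictly. So sharpness here means only that the constant cannot be improved.
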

	\begin{proof}
 Since $f=h+\overline{g}\in\mathcal{H_R}$, from hypothesis, it follows that
 $$h'(z)=\frac{1}{1-z^m/m},~m\ge 1.$$ Now it is easy to see that
 \begin{align}\label{P-111}
 \frac{1}{1+r}\le|h'(z)|\le\frac{1}{1-r},~ |z|=r.
 \end{align}
 Since dilatation $\omega=g'/h'$ of $f=h+\overline{g}$ is analytic with $|\omega|<1$, so for $|\omega(0)|=t$ and $|z|=r$, we have
 \begin{align}\label{P-121}
 	\frac{r-t}{1-tr}\le |\omega(z)|\le \frac{r+t}{1+tr}.
 \end{align}
 As $g'=\omega h'$, from \eqref{P-111} and \eqref{P-121}, it follows that 
 
  \begin{align}\label{P-131}
 	\frac{r-t}{(1-tr)(1+r)}\le |g'(z)|\le \frac{r+t}{(1+tr)(1-r)}.
 \end{align}
It is simple to show that minimum value of the left hand quantity in \eqref{P-131} is $0$ at $t=r$ and maximum value of the right hand quantity in \eqref{P-131} is $\frac{1}{1-r}$ as $t$ tends to $1.$ Thus,
\begin{align*}
	0\leq |g'(z)|\leq\frac{1}{(1-r)}.
\end{align*}
 The estimates are sharp for the harmonic function $f_t=h+\overline{g}$ with $h(z)=-\log{(1-z)}$ and dilatation $\omega_t(z)=(z+t)/(1+t z),~t\in[0,1)$ because for this function, we get
 \begin{align*}
 h'(z)=\frac{1}{1-z}\quad\text{and}\quad	g'(z)=\frac{z+t}{(1+t z)(1-z)}.
 \end{align*}

	\end{proof}
	\begin{thm}\label{log-000200}
		Let $f\in\mathcal{H_R}$ be a sense-preserving harmonic mapping with dilatation $\omega.$ Then for $|z|=r$ the sharp inequalities
		\begin{align*}
			|g(z)|\le
			\begin{cases}
				-\log{(1-r)}-\frac{1-t}{t}\log{(1+tr)}  ,~~\text{when}~~|\omega(0)|=t\neq 0 \\
				 -r-\log{(1-r)},\quad\quad\quad\quad\quad\quad\quad\quad\text{when}~~\omega(0)=0
			\end{cases}
		\end{align*}
		hold.
	\end{thm}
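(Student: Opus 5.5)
Since $g(0)=0$, we integrate $g'$ along the radial segment from $0$ to $z$. For $z=re^{i\theta}$,
\begin{align*}
|g(z)|=\left|\int_0^r g'(\rho e^{i\theta})e^{i\theta}\,d\rho\right|\le\int_0^r |g'(\rho e^{i\theta})|\,d\rho .
\end{align*}
To bound the integrand we reuse the estimates from the proof of the previous theorem. Writing $g'=\omega h'$, the bound \eqref{P-111} gives $|h'(\zeta)|\le 1/(1-\rho)$ for $|\zeta|=\rho$. The Schwarz--Pick type bound \eqref{P-121} with $|\omega(0)|=t$ gives $|\omega(\zeta)|\le(\rho+t)/(1+t\rho)$. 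Combining them, as in \eqref{P-131},
\begin{align*}
|g(z)|\le\int_0^r\frac{\rho+t}{(1+t\rho)(1-\rho)}\,d\rho .
\end{align*}

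The main computational step is the partial fraction decomposition for $t\neq0$. We have
\begin{align*}
\frac{\rho+t}{(1+t\rho)(1-\rho)}=\frac{1}{1-\rho}-\frac{1-t}{1+t\rho},
\end{align*}
which can be checked by clearing denominators: $(1+t\rho)-(1-t)(1-\rho)=\rho+t$. Integrating from $0$ to $r$ then gives $-\log(1-r)-\frac{1-t}{t}\log(1+tr)$, which is the first case of the statement.

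For $\omega(0)=0$ we use Schwarz's lemma, $|\omega(\rho e^{i\theta})|\le\rho$, so the integrand becomes
\begin{align*}
\frac{\rho}{1-\rho}=\frac{1}{1-\rho}-1 .
\end{align*}
Integrating yields $-\log(1-r)-r$. This is also the $t\to0$ limit of the first case, since $\frac{1-t}{t}\log(1+tr)\to r$.

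For sharpness we take the extremal function from the previous theorem: $h(z)=-\log(1-z)$, which lies in $\mathcal{R}$ with $m=1$, and dilatation $\omega_t(z)=(z+t)/(1+tz)$. For $\omega(0)=0$ we take $\omega(z)=z$. On the positive real axis every factor is positive and real, so equality holds in each step at $z=r$. In particular $g(r)=\int_0^r\frac{\rho+t}{(1+t\rho)(1-\rho)}\,d\rho$ attains the bound.

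The only delicate points are checking that the extremal function belongs to $\mathcal{H_R}$ and carrying out the partial fraction identity correctly. Neither is a real obstacle.
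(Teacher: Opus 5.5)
Your proposal is correct and follows the paper's proof essentially step for step. Both arguments integrate $g'=\omega h'$ along the radius, using $|h'(\zeta)|\le 1/(1-\rho)$ and the Schwarz--Pick bound $|\omega(\zeta)|\le(\rho+t)/(1+t\rho)$ (with $|\omega(\zeta)|\le\rho$ when $\omega(0)=0$), and both prove sharpness with the same extremal functions: $h(z)=-\log(1-z)$ together with $\omega_t(z)=(z+t)/(1+tz)$, or with $\omega(z)=z$. Your explicit partial-fraction check and your verification that equality holds on $[0,r]$ supply details the paper leaves implicit.
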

	\begin{proof}
	It is well known that for analytic function $\omega:\mathbb{D}\rightarrow\mathbb{D}$ with $|\omega(0)|=t$, the inequality
	\begin{align}|\label{log-000300}
		\omega(z)|\le \frac{t+|z|}{1+t|z|}
	\end{align}	
	and for analytic function $h\in\mathcal{R}$, the inequality
	\begin{align}\label{log-000320}
		|h'(z)|\le \frac{1}{1-|z|}
	\end{align}	
	hold. By hypothesis it follows that $g'=\omega h'$ and so from \eqref{log-000300} and \eqref{log-000320}, for $t\ne0,$ we get
	\begin{align*}
		|g(z)|\le & \int_{0}^{r}|\omega(\zeta)||h'(\zeta)|d|\zeta|\\
		       \le & \int_{0}^{r}\frac{t+|\zeta|}{1+t|\zeta|}\frac{1}{1-|\zeta|}d|\zeta|\\
		       = & -\log{(1-r)}-\frac{1-t}{t}\log{(1+tr)}.
	\end{align*}
	On the other hand, when $\omega(0)=0,$ the inequality \eqref{log-000300} becomes $|\omega(z)|\le |z|$ and so from $g'=\omega h'$ and \eqref{log-000320}, we have
		\begin{align*}
			|g(z)|\le -r-\log{(1-r)}.
		\end{align*}
	
		To show that the estimates are sharp, we consider the harmonic mapping $f(z)=h(z)+\overline{g(z)}$ where $h(z)=-\log{(1-z)}$ and dilatation $\omega(z).$ Then the first estimate is sharp for $\omega(z)=(t+z)/(1+t z),~t\in(0,1)$ and the second estimate is sharp for $\omega(z)=z.$
	\end{proof}
	
Now we present our last result which gives some coefficient estimate for analytic part of a harmonic mapping in $\mathcal{H_R}$.	
	
	\begin{thm}
		Let $f=h+\overline{g}\in\mathcal{H_R}$ be a sense preserving harmonic mapping of the form \eqref{P-151}. Then the sharp estimate $|a_n|\le 1/n$ holds. 
	\end{thm}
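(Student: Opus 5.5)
The plan is to expand $h'$ as a geometric series and read off the coefficients directly. Since $f\in\mathcal{H_R}$, we have $h\in\mathcal{R}$, so
\begin{align*}
h'(z)=\frac{m}{m-z^m}=\frac{1}{1-z^m/m}=\sum_{k=0}^{\infty}\frac{z^{mk}}{m^k},\qquad |z|<1,
\end{align*}
and the series converges since $|z^m/m|<1$. On the other hand, from \eqref{P-151} we have $h'(z)=1+\sum_{n=2}^\infty n a_n z^{n-1}$.

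Comparing coefficients of $z^{n-1}$ gives two cases. If $n-1$ is not a multiple of $m$, then $na_n=0$, so $a_n=0$. If $n-1=mk$ with $k\ge1$, then $na_n=m^{-k}$, so $a_n=\frac{1}{n\,m^k}$. Since $m\ge1$ and $k\ge1$, in both cases $|a_n|\le \frac{1}{n}$. Equivalently,
\begin{align*}
h(z)=z+\sum_{k\ge1}\frac{z^{mk+1}}{(mk+1)m^k}.
\end{align*}
This uses only the defining formula for $\mathcal{R}$ and none of the earlier theorems.

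For sharpness we take $m=1$, which gives $h(z)=-\log(1-z)=\sum_{n\ge1} z^n/n$. Then $a_n=1/n$ for every $n$, so equality holds for all $n$ simultaneously. Any admissible dilatation can be used, for instance $\omega(z)=z$ or $\omega_q$ as in the first theorem, so that $f=h+\overline{g}\in\mathcal{H_R}$. This is the same extremal $h$ used in the earlier sharpness arguments.

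There is no real obstacle here. The only care needed is in the index bookkeeping: matching the power $z^{mk}$ in the series for $h'$ to the coefficient $(mk+1)a_{mk+1}$, and noting that the condition $m\in\mathbb{N}$, i.e. $m\ge1$, is what makes $m^{-k}\le1$.
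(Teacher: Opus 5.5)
Your proof is correct and follows the same route as the paper. Both expand $h'(z)=m/(m-z^m)$ as a geometric series, read off that $a_n=0$ unless $n=mk+1$, and use $h(z)=-\log(1-z)$ (the case $m=1$) for sharpness. Your direct identity $a_{mk+1}=\frac{1}{(mk+1)m^k}\le\frac{1}{mk+1}$ is cleaner than the paper's step of maximizing $\psi(m)=\frac{1}{m^{k-1}[(k-1)m+1]}$ over $m$ with $k$ fixed, since that step blurs the indexing: $n$ itself depends on $m$.
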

	
	\begin{proof}
		By hypothesis, it follows that   $$h'(z)=\frac{m}{m-z^m}=1+\sum\limits_{k=1}^\infty \frac{z^{km}}{m^k}.$$ A simple calculation show that 
		\begin{align*}
			h(z)=\sum\limits_{k=1}^\infty \frac{z^{(k-1)m+1}}{m^{(k-1)}[(k-1)m+1]}.
		\end{align*}
		Thus the $n$th coefficient of $h$ can be given by
		\begin{align}\label{P-180}
			a_n=
			\begin{cases}
				\frac{1}{m^{(k-1)}[(k-1)m+1]} ,\quad\text{when}~~n=(k-1)m+1 \\
				0,\quad\quad\quad\quad\quad\quad\quad\text{otherwise}.
			\end{cases}
		\end{align}
Let
		\begin{align*}
		 \psi(m)=\frac{1}{m^{(k-1)}[(k-1)m+1]}
		\end{align*}
		 It is easy to see that $\psi$ is decreasing function of $m$ and so is maximum at $m=1.$ Therefore, the $n$th coefficient of $h$ is bounded above by $\psi(1)=1/k$ with $n=k$, that is,  
		 $$|a_n|\le \frac{1}{n}.$$
		 	The estimate is sharp for the harmonic mapping $f=h+\overline{g}$ with $$h(z)=-\log{(1-z)}=z+\sum\limits_{n=2}^\infty \frac{z^n}{n}.$$
		 \end{proof}


	\section*{Declarations:}

	\noindent\textbf{Data availability:}
	Data sharing not applicable to this article as no data sets were generated or analyzed during the current study.\\

	\noindent\textbf{Conflict of interest:} The author declares that he has no conflict of interest.

	
\end{document}